\theoremstyle{plain}\newtheorem{theo}{Theorem}
\theoremstyle{plain}\newtheorem{cor}{Corollary}
\theoremstyle{definition}
\theoremstyle{plain}\newtheorem{defi}{Definition}[section]
\theoremstyle{plain}\newtheorem{lem}[defi]{Lemma}
\theoremstyle{plain}
\theoremstyle{definition}
\newcommand{\N}{{\mathds N}}
\newcommand{\R}{{\mathds R}}
\newcommand{\Z}{{\mathds Z}}
\DeclareMathOperator{\var}{Var}
\DeclareMathOperator{\cov}{Cov}
\DeclareMathOperator{\sgn}{sgn}
\begin{document}

\title[Nonnormal Limits and Bootstrap for Quantiles]{Normal Limits, Nonnormal Limits, and the Bootstrap for Quantiles of Dependent Data}

\author[O.Sh. Sharipov, M. Wendler]{Olimjon Sh. Sharipov, Martin Wendler}

\email{Martin.Wendler@rub.de}

\keywords{quantiles; strong mixing; block bootstrap}

\subjclass[2000]{62G30; 62G09; 60G10}

\begin{abstract} We will show under very weak conditions on differentiability and dependence that the central limit theorem for quantiles holds and that the block bootstrap is weakly consistent. Under slightly stronger conditions, the bootstrap is strongly consistent. Without the differentiability condition, quantiles might have a non-normal asymptotic distribution and the bootstrap might fail.
\end{abstract}

\maketitle
\section{Limit Behaviour of Quantiles}

Let $\left(X_n\right)_{n\in\Z}$ be a stationary sequence of real-valued random variables with distribution function $F$ and $p\in\left(0,1\right)$. Then the $p$-quantile $t_p$ of $F$ is defined as
\begin{equation*}
t_p:=F^{-1}\left(p\right):=\inf\left\{t\in\R\big|F\left(t\right)\geq p\right\}
\end{equation*}
and can be estimated by the empirical $p$-quantile, i.e. the $\lceil \frac{n}{p} \rceil$-th order statistic of the sample $X_1\ldots,X_n$. This also can be expressed as the $p$-quantile $F^{-1}_n\left(p\right)$ of the empirical distribution function $F_n\left(t\right):=\frac{1}{n}\sum_{i=1}^{n}\mathds{1}_{X_i\leq t}$. It is clear that $F^{-1}_n\left(p\right)$ is greater than $t_p$ iff $F_n\left(t_p\right)$ is smaller than $p$. This relation between the empirical quantile and the empirical distribution function was made more precise by Bahadur. Under the condition that the random variables $X_1,\ldots,X_n$ are independent and that $F$ is differentiable twice in a neighborhood of $t_p$, he proved that
\begin{equation*}
F^{-1}_n\left(p\right)-t_p=\frac{p-F_n\left(t_p\right)}{f\left(t_p\right)}+R_n,
\end{equation*}
where $f=F'$ is the derivative of the distribution function and $R=O(n^{-\frac{3}{4}}\log n)$ almost surely. Ghosh (1971) established a weak form of the Bahadur representation, only assuming that $F$ is differentiable once in $t_p$. He showed that $R_n=o_P\left(n^{-\frac{1}{2}}\right)$ under this condition, meaning that $\sqrt{n}R_n$ converges to 0 in probability. As noted by Lahiri (1992), the condition that $F$ is differentiable is also necessary for the central limit theorem for $F_n^{-1}(p)$. Weiss (1971) derived the nonnormal limit distribution of quantiles if the densitiy has a jump in $t_p$. De Haan and Taconis-Haantjes (1979) and Ghosh and Sukhatme (1981) investigated the asymptotic distribution for $F_n^{-1}(p)$, if $F$ is not differentiable, but regular varying. We will extend their results to strongly mixing random variables. 

There is a broad literature on the Bahadur representation for strongly mixing data. Babu and Singh (1978) proved such a representation under an exponentially fast decay of the strong mixing coefficients, this was weakened by Yoshihara (1995), Sun (2006) and Wendler (2011) to a polynomial decay of the strong mixing coefficients. All these articles deal with the case that $F$ is differentiable.

\begin{defi} Let $\left(X_n\right)_{n\in\Z}$ be a stationary process. Then the strong mixing coefficients are defined as
\begin{equation}
 \alpha (k) := \sup \left\{\left| P[AB] - P[A]P[B] \right| : A \in \mathcal{F}^n_1, B \in \mathcal{F}^\infty_{n+k}, n \in \Z \right\}
\end{equation}
where $\mathcal{F}^l_a$ is the $\sigma$-field generated by random variables $X_a, \ldots, X_l$. We say that $\left(X_n\right)_{n\in\mathds{Z}}$ is strongly mixing if $\lim_{k \rightarrow \infty} \alpha (k) = 0$.
\end{defi}
For further information on strong mixing and a detailed description of the other mixing assumptions, see Bradley (2007).

The paper is organized at follows: We will start with our asymptotic results for sample quantiles. In the next section, we will introduce the bootstrap procedure and give theorems about the consistency for the bootstrap. In the third section, we will provide the proofs of our results. Our first theorem is a nonlinear version of the weak Bahadur representation under mild mixing assumptions:

\begin{theo}\label{theo1} Let $\left(X_n\right)_{n\in\Z}$ be a stationary, strongly mixing sequence of random variables with distribution function $F$, such that for a $\rho>0$
\begin{equation*}
F(t_p+h)-F(t_p)=M|h|^\rho\sgn(h)+o(|h|^\rho)
\end{equation*}
as $h\rightarrow0$ and $\sum_{n=1}^\infty\alpha(n)<\infty$. Then
\begin{equation*}
F^{-1}_n\left(p\right)-t_p=\left(\frac{|p-F_n\left(t_p\right)|}{M}\right)^{\frac{1}{\rho}}\sgn\left(p-F_n(t_p)\right)+R_n,
\end{equation*}
where $R_n=o_P(n^{-\frac{1}{2\rho}})$.
\end{theo}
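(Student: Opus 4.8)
The plan is to invert the local expansion of $F$ at $t_p$, reducing the statement to the behaviour of the centered empirical process $U_n(t):=F_n(t)-F(t)$ on a shrinking neighbourhood of $t_p$. Write $\xi_n:=F_n^{-1}(p)-t_p$ and let $\psi(x):=(|x|/M)^{1/\rho}\sgn(x)$ be the inverse of $x\mapsto M|x|^\rho\sgn(x)$, so that the claimed main term is exactly $\psi\big(p-F_n(t_p)\big)$. The two structural facts I would rely on throughout are the switching relation $\{F_n^{-1}(p)>t\}\subseteq\{F_n(t)<p\}\subseteq\{F_n^{-1}(p)\geq t\}$ and the bound $|F_n(t_p+\xi_n)-p|\leq 1/n$ coming from the jump size of $F_n$. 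I would then proceed in three steps: a rate bound $\xi_n=O_P(n^{-1/(2\rho)})$, an oscillation bound for $U_n$, and an algebraic inversion.

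\textbf{Rate.} Fix $\lambda>0$ and apply the switching relation at $t=t_p+\lambda n^{-1/(2\rho)}$: on $\{\xi_n>\lambda n^{-1/(2\rho)}\}$ we have $F_n(t)<p$, hence $U_n(t)<p-F(t)=-\big(F(t)-F(t_p)\big)=-M\lambda^\rho n^{-1/2}(1+o(1))$. Since $(X_n)$ is stationary and summably strongly mixing and the summands are bounded, $\var\big(F_n(t)\big)\leq Cn^{-1}$, so Chebyshev's inequality bounds $P\big(\xi_n>\lambda n^{-1/(2\rho)}\big)$ by $C/(M^2\lambda^{2\rho})$, uniformly in $n$; the lower tail is symmetric. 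Letting $\lambda\to\infty$ gives tightness of $n^{1/(2\rho)}\xi_n$.

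\textbf{Oscillation (the crux).} For $t$ in the window $|t-t_p|\leq\lambda n^{-1/(2\rho)}$ put $\Delta(t):=|F(t)-F(t_p)|$ and $Y_i(t):=\mathds{1}_{\{X_i\leq t\}}-\mathds{1}_{\{X_i\leq t_p\}}-\big(F(t)-F(t_p)\big)$, so that $U_n(t)-U_n(t_p)=\frac1n\sum_{i=1}^nY_i(t)$. A Rio/Davydov-type covariance inequality for these bounded variables yields $|\cov(Y_0(t),Y_k(t))|\leq C\min\big(\alpha(k),\Delta(t)\big)$, whence $\var\big(\sum_{i=1}^nY_i(t)\big)\leq Cn\,v(\Delta(t))$ with $v(\Delta):=\sum_{k\geq0}\min(\alpha(k),\Delta)$; crucially $v(\Delta)\to0$ as $\Delta\to0$ because $\sum_k\alpha(k)<\infty$. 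On the window $\Delta(t)=O(n^{-1/2})$, so each fixed $t$ obeys $U_n(t)-U_n(t_p)=o_P(n^{-1/2})$. To upgrade this to the supremum I would discretize the window along points $t_0=t_p<t_1<\dots<t_{N_n}$ (and symmetrically to the left) with $F(t_{j+1})-F(t_j)=\eta_n=o(n^{-1/2})$; monotonicity of $F_n$ and $F$ sandwiches the supremum of $|U_n(t)-U_n(t_p)|$ over the window between $\max_j|U_n(t_j)-U_n(t_p)|$ and an $O(\eta_n)$ mesh error, and a fourth-moment (or maximal) inequality for bounded mixing sums controls the maximum over the $N_n$ grid points after a union bound, with $\eta_n$ tuned so that $N_n\,v(O(n^{-1/2}))^2\to0$. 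This yields $\sup_{|t-t_p|\leq\lambda n^{-1/(2\rho)}}|U_n(t)-U_n(t_p)|=o_P(n^{-1/2})$.

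\textbf{Inversion.} Combining $p=F_n(t_p+\xi_n)+O(1/n)=F(t_p+\xi_n)+U_n(t_p+\xi_n)+O(1/n)$ with the expansion $F(t_p+\xi_n)=p+M|\xi_n|^\rho\sgn(\xi_n)+o(|\xi_n|^\rho)$, the rate bound (so that $o(|\xi_n|^\rho)=o_P(n^{-1/2})$), the oscillation bound ($U_n(t_p+\xi_n)=U_n(t_p)+o_P(n^{-1/2})$) and $U_n(t_p)=F_n(t_p)-p$ gives
\begin{equation*}
M|\xi_n|^\rho\sgn(\xi_n)=\big(p-F_n(t_p)\big)+o_P(n^{-1/2}).
\end{equation*}
Applying $\psi$ yields $\xi_n=\psi\big((p-F_n(t_p))+o_P(n^{-1/2})\big)$, so that $R_n=\psi\big((p-F_n(t_p))+o_P(n^{-1/2})\big)-\psi\big(p-F_n(t_p)\big)$. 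Since $p-F_n(t_p)=O_P(n^{-1/2})$, I would finish with a regularity estimate on $\psi$: for $\rho\geq1$ the map $\psi$ is H\"older of order $1/\rho$, hence $|R_n|\leq C\big(o_P(n^{-1/2})\big)^{1/\rho}=o_P(n^{-1/(2\rho)})$; for $\rho<1$ the map $\psi$ is $C^1$ with $\psi'(x)=c|x|^{1/\rho-1}$, and the mean value theorem gives $|R_n|\leq C\big(O_P(n^{-1/2})\big)^{1/\rho-1}o_P(n^{-1/2})=o_P(n^{-1/(2\rho)})$. Either way $R_n=o_P(n^{-1/(2\rho)})$. I expect the oscillation step to be the main obstacle, since it is there that the refined covariance bound and the summability of the mixing coefficients are used essentially, whereas the rate and inversion steps are comparatively routine.
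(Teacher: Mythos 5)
Your proposal proves the theorem, but by a genuinely different and strictly more demanding route than the paper. The paper follows Ghosh (1971): with $g(x)=M|x|^\rho\sgn(x)$ it sets $V_n=n^{\frac{1}{2\rho}}(F_n^{-1}(p)-t_p)$, $W_n=g^{-1}(\sqrt{n}(p-F_n(t_p)))$, uses the switching relation to rewrite $\{V_n\le t\}$ as an event about $F_n(t_p+tn^{-1/2\rho})$, and then invokes Lemma \ref{lem1}. The whole point of that lemma is that one only needs, for each \emph{fixed} $t$, the convergence $\sqrt{n}\bigl(F_n(t_p+\tfrac{t}{n^{1/2\rho}})-F(t_p+\tfrac{t}{n^{1/2\rho}})-F_n(t_p)+p\bigr)\rightarrow0$ in probability; this is exactly your pointwise variance computation (the paper splits the covariance sum at $n^{1/4}$, you use $v(\Delta)=\sum_k\min(\alpha(k),\Delta)$ — the same estimate). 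Ghosh's lemma thus eliminates precisely the step you call the crux: no supremum over the shrinking window, no discretization, no maximal inequality. Your rate step and your inversion step (H\"older continuity of $\psi$ for $\rho\ge1$, mean value theorem for $\rho<1$) are correct and play the role that Lemma \ref{lem1} plays implicitly in the paper.

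The one genuine soft spot is how you control the grid maximum. Under the sole assumption $\sum_n\alpha(n)<\infty$, a Rosenthal/Yokoyama-type fourth-moment bound of the form $E\bigl(\sum_{i\le n}Y_i(t)\bigr)^4\le Cn^2v(\Delta)^2$ is \emph{not} available — such bounds need conditions like $\sum_k k\,\alpha(k)<\infty$ — so the route encoded in your tuning condition $N_n\,v(O(n^{-1/2}))^2\rightarrow0$ does not go through. Fortunately you do not need fourth moments: since $v(\Delta)\rightarrow0$ as $\Delta\rightarrow0$, choose the mesh adaptively, $\eta_n=n^{-1/2}\sqrt{v(Cn^{-1/2})}$, so that $N_n\asymp v(Cn^{-1/2})^{-1/2}\rightarrow\infty$ slowly; then the plain second-moment Chebyshev union bound gives $N_n\cdot Cv(Cn^{-1/2})\le C\sqrt{v(Cn^{-1/2})}\rightarrow0$ while the mesh error stays $o(n^{-1/2})$. (Alternatively the M\'oricz maximal inequality, which the paper uses for Theorem \ref{theo3}, works at the cost of a $\log^2N_n$ factor, again with a slowly growing grid.) A second, minor caveat: your bound $|F_n(t_p+\xi_n)-p|\le 1/n$ assumes the jump of $F_n$ at the sample quantile is $1/n$, i.e.\ no ties; the hypothesis forces continuity of $F$ only at $t_p$ itself, so atoms accumulating at $t_p$ with masses $o(|h|^\rho)$ are permitted, and for dependent data ties then occur with positive probability. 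The jump is still $o_P(n^{-1/2})$ (atom masses in the window are $o(n^{-1/2})$ and your oscillation bound controls the empirical excess), but this needs an argument — or can be avoided altogether by working with events $\{\xi_n>t\}=\{F_n(t_p+t)<p\}$, as the paper does.
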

The mixing assumption in this Theorem is milder than in the article of Sun (2006), who assumed that $\alpha(n)=O(n^{-\alpha})$ for an $\alpha>10$, and in the article of Wendler (2011), who assumed that $\alpha>3$, but in these theorems, the convergence is almost surely. An easy example of distribution function of the above form is $F(t)=\frac{1}{2}|t|^\rho\sgn(t)+\frac{1}{2}$ for $t\in[-1,1]$ and $p=\frac{1}{2}$.

By Theorem 1.6 of Ibragimov (1962), the summability of the mixing coefficients is a sufficient condition for the central limit theorem of partial sums of bounded random variables, so we obtain with the help of the above representation:

\begin{theo}\label{theo1b} If the assumptions of Theorem \ref{theo1} hold, then $n^{\frac{1}{2\rho}}(F^{-1}_n\left(p\right)-t_p)$ converges in distribution to $C|W|^{\frac{1}{\rho}}\sgn(W)$, where $C$ is a constant and $W$ is a normal distributed random variable.
\end{theo}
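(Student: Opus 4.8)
The plan is to combine Theorem~\ref{theo1} with a central limit theorem for $F_n(t_p)$ and then apply the continuous mapping theorem. The essential structure is that Theorem~\ref{theo1} already reduces the quantile deviation to an explicit nonlinear function of the linear statistic $p - F_n(t_p)$, up to a remainder that vanishes at the right rate. So the first step is to establish the asymptotic normality of $\sqrt{n}\bigl(p - F_n(t_p)\bigr)$. Writing $p - F_n(t_p) = \frac{1}{n}\sum_{i=1}^n \bigl(p - \mathds{1}_{X_i \leq t_p}\bigr)$, this is a normalized partial sum of the bounded, stationary, strongly mixing sequence $Y_i := p - \mathds{1}_{X_i \leq t_p}$, which has mean zero since $F(t_p) = p$. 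By Theorem~1.6 of Ibragimov (1962), the summability condition $\sum_{n=1}^\infty \alpha(n) < \infty$ guarantees that $\sqrt{n}\bigl(p - F_n(t_p)\bigr)$ converges in distribution to a centered normal random variable, say $\sigma W$ where $W$ is standard normal and $\sigma^2 = \var(Y_0) + 2\sum_{k=1}^\infty \cov(Y_0, Y_k)$ is the long-run variance (assuming $\sigma > 0$; the degenerate case would need separate comment).

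The second step is to rescale the representation in Theorem~\ref{theo1}. Multiplying through by $n^{\frac{1}{2\rho}}$ gives
\begin{equation*}
n^{\frac{1}{2\rho}}\bigl(F^{-1}_n(p) - t_p\bigr) = \left(\frac{|\sqrt{n}\,(p - F_n(t_p))|}{M}\right)^{\frac{1}{\rho}}\sgn\bigl(p - F_n(t_p)\bigr) + n^{\frac{1}{2\rho}} R_n,
\end{equation*}
where I have pulled the factor $n^{\frac{1}{2}}$ inside the exponent $\frac{1}{\rho}$, using $n^{\frac{1}{2\rho}} = (n^{\frac{1}{2}})^{\frac{1}{\rho}}$. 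Since $R_n = o_P(n^{-\frac{1}{2\rho}})$ by hypothesis, the term $n^{\frac{1}{2\rho}} R_n$ converges to $0$ in probability and can be discarded via Slutsky's theorem. It then remains to pass the weak limit through the map $x \mapsto \left(\frac{|x|}{M}\right)^{\frac{1}{\rho}}\sgn(x)$.

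The third step is the continuous mapping argument. The function $g(x) := (|x|/M)^{1/\rho}\sgn(x)$ is continuous on all of $\R$ (the potential trouble spot at $x = 0$ is fine, since $g$ is odd and $|g(x)| \to 0$ as $x \to 0$). Hence by the continuous mapping theorem, $g\bigl(\sqrt{n}(p - F_n(t_p))\bigr)$ converges in distribution to $g(\sigma W) = (\sigma/M)^{1/\rho} |W|^{1/\rho}\sgn(W)$, using that $\sgn(\sigma W) = \sgn(W)$ and $|\sigma W|^{1/\rho} = \sigma^{1/\rho}|W|^{1/\rho}$ because $\sigma > 0$. Setting the constant $C := (\sigma/M)^{1/\rho}$ yields the claimed limit $C|W|^{1/\rho}\sgn(W)$.

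I expect the genuinely delicate point to be the verification that Ibragimov's theorem applies cleanly and that the long-run variance $\sigma^2$ is well-defined and positive; the summability $\sum \alpha(n) < \infty$ ensures absolute convergence of the covariance series for bounded variables, so $\sigma^2 < \infty$ is automatic, but positivity is what makes the limit genuinely nonnormal rather than degenerate. Everything else---the rescaling bookkeeping of the exponent $\frac{1}{2\rho}$ and the continuity of $g$---is routine, so the proof is short and mostly a matter of assembling Theorem~\ref{theo1}, Ibragimov's CLT, Slutsky, and the continuous mapping theorem in the correct order.
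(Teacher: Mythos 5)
Your proof is correct and follows exactly the route the paper has in mind: the authors omit the proof of Theorem~\ref{theo1b}, stating only that it is an obvious consequence of Theorem~\ref{theo1} (plus Ibragimov's CLT for $\sqrt{n}(p-F_n(t_p))$, already invoked there) and the continuous mapping theorem. You have simply filled in the routine details---the rescaling $n^{\frac{1}{2\rho}}=(n^{\frac{1}{2}})^{\frac{1}{\rho}}$, Slutsky for the remainder, and continuity of $x\mapsto(|x|/M)^{1/\rho}\sgn(x)$ at $0$---which the paper deliberately leaves to the reader.
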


For the special case $\rho=1$ (differentiability), we get Lemma 5.1 of Sun and Lahiri (2006) (central limit theorem for $F^{-1}_n\left(p\right)$), as $|W|\sgn(W)=W$ is normally distributed. Note that $\sum_{n=1}^\infty\alpha(n)<\infty$ is the best known condition for the central limit theorem for bounded random variables and that the central limit theorem might fail if one just assumes that $\alpha(n)=O(\frac{1}{n})$, see remark 10.11 in the book of Bradley (2007). So it is not possible to establish asymptotic normality of the sample quantile under this condition.

In the other case $\rho\neq1$, the limit $C|W|^{\frac{1}{\rho}}\sgn(W)$ has a nonnormal distribution.

\section{Block Bootstrap for Quantiles} The statistical inference for quantiles is a dificult task, many methods rely on estimates of the unknown density. An alternative method is the Bootstrap. Bickel and Freedman (1981) established the consistency of the Bootstrap for quantiles for independent data, more work on this topic was done by Ghosh et. al (1984) and Babu (1986).

For dependent data, normal approximation becomes even more difficult, but there is up to our knowledge only one article about the bootstrap for quantiles under dependence: Sun and Lahiri (2006) have shown the strong consistency (almost sure convergence to the right limit) of the bootstrap under strong mixing. We will establish a weak Bahadur representation for the bootstrap version of the quantile and will conclude that the bootstrap is weakly consistent for $\rho=1$, that means the bootstrap distribution function converges in probabilty to the same limit as the true distribution function. The bootstrap is not consistent for $\rho\neq1$.

There are different ways to resample blocks, for example the circular block bootstrap or the moving block bootstrap (for a detailed description of the different bootstrapping methods see Lahiri (2003)). We consider the circular block bootstrap introduced by Politis and Romano (1992). Instead of the original sample of n observations with an unknown distribution, construct new samples $X^\star_{1},\ldots,X^\star_{bl}$ as follows: Extend the sample $X_{1},\ldots,X_{n}$ periodically by $X_{i+n}=X_{i}$, choose blocks of $l=l_n$ consecutive observations of the sample randomly and repeat that $b=\lfloor\frac{n}{l}\rfloor$ times independently: For $j=1,\ldots,n$, $k=0,\ldots,b-1$
\begin{equation*}
P^\star\left(X^\star_{kl+1}=X_{j},\ldots,X^\star_{(k+1)l}=X_{j+l-1}\right)=\frac{1}{n},
\end{equation*}
where $P^{\star}$ is the bootstrap distribution conditionally on $\left(X_n\right)_{n\in\mathbb{N}}$, $E^{\star}$ and $\operatorname{Var}^{\star}$ are the conditional expectation and variance. For the circular block bootstrap version of the sample mean, Radulovi\'{c} (1996) has established weak consistency under very weak conditions.

$F_n^\star(t)=\frac{1}{bl}\sum_{i=1}^{bl}\mathds{1}_{\{X_i^\star\leq t\}}$ denotes the Bootstrap version of the empirical distribution function and $F_n^{\star-1}(p)$ the $p$-quantile of the bootstrap sample.

\begin{theo}\label{theo2} Let $\left(X_n\right)_{n\in\Z}$ be a stationary, strongly mixing sequence of random variables with distribution function $F$, such that for a $\rho>0$ and $M\neq0$
\begin{equation*}
F(t_p+h)-F(t_p)=M|h|^\rho\sgn(h)+o(h)
\end{equation*}
as $h\rightarrow0$ and $\sum_{n=1}^\infty\alpha(n)<\infty$. Furthermore, choose the block length in such a way that $\frac{1}{l}+\frac{l}{n}\rightarrow0$. Then
\begin{equation*}
F^{\star-1}_n\left(p\right)-t_p=\left(\frac{|p-F_n^\star\left(t_p\right)|}{M}\right)^{\frac{1}{\rho}}\sgn(p-F_n^\star\left(t_p\right))+R_n^\star,
\end{equation*}
where $R_n^\star=o_P(n^{-\frac{1}{2\rho}})$.
\end{theo}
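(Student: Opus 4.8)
The plan is to mirror the proof of Theorem~\ref{theo1} (the non-bootstrap weak Bahadur representation), replacing the empirical distribution function $F_n$ by its bootstrap counterpart $F_n^\star$ and tracking that every stochastic bound now holds in $P$-probability with respect to the joint law of the data and the resampling mechanism. The key structural fact I would exploit is the same elementary inversion relation already noted in the introduction: for the order statistics, $F_n^{\star-1}(p)>t$ precisely when $F_n^\star(t)<p$, so that controlling the fluctuations of $t\mapsto F_n^\star(t)$ near $t_p$ automatically controls the quantile $F_n^{\star-1}(p)$. Concretely, I would set $a_n:=n^{-1/(2\rho)}$ and, for a fixed constant $c>0$, examine the event that $F_n^{\star-1}(p)-t_p$ exceeds $\bigl(|p-F_n^\star(t_p)|/M\bigr)^{1/\rho}\sgn(p-F_n^\star(t_p))$ by more than $c\,a_n$; via the inversion relation this becomes a statement about $F_n^\star$ evaluated at the deterministic shifted point $t_p+\bigl(|p-F_n^\star(t_p)|/M\bigr)^{1/\rho}\sgn(\cdots)+c\,a_n$, which I then linearize using the regular-variation expansion $F(t_p+h)-F(t_p)=M|h|^\rho\sgn(h)+o(h)$.

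The heart of the argument is an \emph{oscillation} (stochastic equicontinuity) bound for the bootstrapped empirical process. I would define $U_n^\star(t):=\sqrt{bl}\,\bigl(F_n^\star(t)-F_n(t)\bigr)$ and aim to show that, uniformly over $|t-t_p|\le K a_n$ for fixed $K$,
\begin{equation*}
\sup_{|t-t_p|\le K a_n}\bigl|U_n^\star(t)-U_n^\star(t_p)\bigr|=o_P(1),
\end{equation*}
together with the analogous statement for the centering process $\sqrt{bl}\,(F_n(t)-F(t))$, which is available from the hypotheses of Theorem~\ref{theo1} since those conditions are satisfied here (the expansion with error $o(h)$ is stronger than $o(|h|^\rho)$ near $0$ when $\rho<1$, and equally usable when $\rho\ge1$). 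Combining these two oscillation bounds gives control on $\sqrt{bl}\,(F_n^\star(t)-F(t))-\sqrt{bl}\,(F_n^\star(t_p)-F(t_p))$, and since $\sqrt{bl}\asymp\sqrt{n}$ while the window width $K a_n=K n^{-1/(2\rho)}$ contributes an increment in $F$ of order $(a_n)^\rho=n^{-1/2}$, the linear term and the oscillation term are exactly on the same scale, so the remainder $R_n^\star$ is pushed to $o_P(a_n)$.

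The main obstacle will be establishing that oscillation bound for $F_n^\star$ under the bare summability condition $\sum_n\alpha(n)<\infty$, without the polynomial rates used by Sun (2006) or Wendler (2011). I would handle this by conditioning: given the sample, $F_n^\star(t)-E^\star F_n^\star(t)$ is a normalized sum of $b$ i.i.d.\ (under $P^\star$) block averages, so a conditional variance computation shows $\operatorname{Var}^\star\bigl(U_n^\star(t)-U_n^\star(t_p)\bigr)$ is controlled by a block-averaged version of $\mathds{1}_{t_p<X_i\le t}$ whose expectation is $O\bigl(F(t)-F(t_p)\bigr)=O(a_n^\rho)=O(n^{-1/2})$; a chaining or monotonicity argument over the $O(1)$-length window then upgrades this to a uniform $o_P(1)$ bound, using that $E^\star F_n^\star(t)=F_n(t)+O_P(l/n)$ with the block-boundary correction negligible because $l/n\to0$. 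The delicate points are the interplay between the conditional ($P^\star$) and unconditional ($P$) randomness in passing from variance bounds to the $o_P$ statement, and verifying that the step-function nature of $F_n^\star$ (jumps of size $1/(bl)$) does not spoil the argument; these I expect to dispatch by a standard truncation-and-Markov argument, deferred to the proof section. Once the oscillation bound and the linearization are in hand, inverting back through the order-statistic relation and taking the constant $c$ arbitrarily small yields $R_n^\star=o_P(n^{-1/(2\rho)})$.
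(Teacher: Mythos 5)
Your proposal takes a genuinely different route from the paper's. The paper never proves (and never needs) a uniform-in-$t$ oscillation bound: it runs Ghosh's device (Lemma~\ref{lem1}), for which it suffices to have (i) tightness of $W_n^\star=g^{-1}\bigl(\sqrt{bl}(p-F_n^\star(t_p))\bigr)$, which it obtains from Theorem~2 of Radulovi\'{c} (1996), and (ii) for each \emph{fixed} $t$, convergence $Z_{t,n}^\star-W_n^\star\rightarrow0$ in probability, proved by an $EE^\star$ second-moment computation that splits into the conditional block variance (using the exact identity $E^\star F_n^\star(t)=F_n(t)$ and the conditional independence of blocks under $P^\star$) and the unconditional term already handled in the proof of Theorem~\ref{theo1}. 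Your variance computations are essentially the same ones, but you then spend extra effort (grid plus monotonicity/chaining) to upgrade them to a bound uniform over the window $|t-t_p|\leq Ka_n$ with $a_n=n^{-1/(2\rho)}$; that is the classical Bahadur--Kiefer-style argument, which can be made to work here, but it is strictly more labor than the pointwise statements that Ghosh's lemma requires.

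Two concrete gaps remain in your plan. First, the point $t_p+\bigl(|p-F_n^\star(t_p)|/M\bigr)^{1/\rho}\sgn(p-F_n^\star(t_p))+c\,a_n$ at which you evaluate $F_n^\star$ is \emph{not} deterministic; it depends on $F_n^\star(t_p)$. This is exactly why your route needs the window-uniform bound, and to apply that bound you must additionally show that this random point lies in the window $|t-t_p|\leq Ka_n$ with probability tending to one as $K\rightarrow\infty$, i.e.\ tightness of $\bigl(\sqrt{bl}(p-F_n^\star(t_p))\bigr)_{n\in\N}$. Your proposal nowhere establishes this (the paper does, via Radulovi\'{c}'s bootstrap CLT; a Chebyshev argument built on your own block-variance bound would also do), and without it the inversion step cannot be closed. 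Second, your statement $E^\star F_n^\star(t)=F_n(t)+O_P(l/n)$, with the correction ``negligible because $l/n\rightarrow0$,'' is unsound as reasoning: after the $\sqrt{bl}\asymp\sqrt{n}$ scaling, a bias of order $l/n$ contributes $O_P(l/\sqrt{n})$, which does \emph{not} vanish under the sole assumption $l/n\rightarrow0$ (take $l=n^{3/4}$). You are saved only because for the \emph{circular} block bootstrap used here $E^\star F_n^\star(t)=F_n(t)$ holds exactly --- that is the purpose of the periodic extension --- so you should invoke the exact identity, as the paper does. (A smaller point: for $\rho>1$ the error $o(h)$ is \emph{weaker} than $o(|h|^\rho)$, so your linearization in fact needs the $o(|h|^\rho)$ form as in Theorem~\ref{theo1}; the $o(h)$ in the statement of Theorem~\ref{theo2} is evidently a misprint, but your parenthetical claim that it is ``equally usable when $\rho\geq1$'' is not correct as stated.) Both gaps are fillable with tools already present in your sketch, but as written the proof is incomplete.
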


Note that we do not center $F^{\star-1}_n\left(p\right)$ with respect to the bootstrapped expectation, but with respect to the true quantile $t_p$. With the help of this theorem, we get weak consistency (convergence in probability) in the case $\rho=1$. For $\rho\neq1$ we will get inconsistency.

\begin{cor}\label{cor3} If the assumptions of Theorem \ref{theo2} hold with $\rho=1$ and additionally $\lim_{n\rightarrow\infty}\var[\sqrt{n}F_n(t_p)]>0$, then
\begin{equation*}
\sup_{t\in\R}\left|P^\star\left(F^{\star-1}_n\left(p\right)-F^{-1}_n\left(p\right)\leq t\right)-P\left(F^{-1}_n\left(p\right)-t_p\leq t\right)\right|\xrightarrow{n\rightarrow\infty}0
\end{equation*}
in probability.
\end{cor}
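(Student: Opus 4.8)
The plan is to reduce both distribution functions appearing in the supremum to the law of a sample mean of the bounded indicators $Y_i:=\mathds{1}_{X_i\le t_p}$, and then to invoke an ordinary central limit theorem for the original data and a block bootstrap central limit theorem for the resampled data. Setting $\rho=1$ in the Bahadur representations of Theorem~\ref{theo1} and Theorem~\ref{theo2} (so that $\left(\tfrac{|x|}{M}\right)\sgn(x)=\tfrac{x}{M}$ for $M>0$) and subtracting, I would first write
\begin{equation*}
F^{\star-1}_n(p)-F^{-1}_n(p)=\frac{F_n(t_p)-F^\star_n(t_p)}{M}+\bigl(R^\star_n-R_n\bigr),
\end{equation*}
where $\sqrt{n}\,R_n=o_P(1)$ and $\sqrt{n}\,R^\star_n=o_P(1)$. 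After multiplying through by $\sqrt{n}$, the remainder is asymptotically negligible and, by Slutsky's lemma (and a conditional version of it for the bootstrap), does not affect the limiting law.

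For the true side, differentiability at $t_p$ gives $F(t_p)=p$ and $M=f(t_p)$, and Theorem~\ref{theo1b} together with the summability $\sum_n\alpha(n)<\infty$ (Ibragimov's central limit theorem applied to the bounded $Y_i$) shows that $\sqrt{n}\,(F^{-1}_n(p)-t_p)$ converges in distribution to a centered normal law $N(0,\sigma^2/M^2)$, where $\sigma^2=\lim_{n\to\infty}\var[\sqrt{n}F_n(t_p)]$ is the long-run variance of the $Y_i$; the extra hypothesis $\sigma^2>0$ guarantees the limit is non-degenerate. Since the limiting distribution function $\Phi_{\sigma^2/M^2}$ is continuous, P\'olya's theorem upgrades this to uniform convergence of $t\mapsto P(F^{-1}_n(p)-t_p\le t)$ to $\Phi_{\sigma^2/M^2}$.

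For the bootstrap side, the key observation is that the circular construction makes each $X^\star_i$ uniform on the sample, so $E^\star F^\star_n(t_p)=F_n(t_p)$ exactly, and hence $\sqrt{n}\,(F_n(t_p)-F^\star_n(t_p))=-\sqrt{n}\,(F^\star_n(t_p)-E^\star F^\star_n(t_p))$ is precisely a centered block bootstrap sample mean of the bounded $Y_i$. Here I would apply the block bootstrap central limit theorem of Radulovi\'{c} (1996): under $\sum_n\alpha(n)<\infty$, boundedness of the $Y_i$, and $1/l+l/n\to0$, the conditional law of this quantity converges in probability to the same normal limit $N(0,\sigma^2/M^2)$. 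Combining this with the displayed representation via a conditional Slutsky argument yields, for each fixed $t$, the convergence $P^\star(F^{\star-1}_n(p)-F^{-1}_n(p)\le t)\to\Phi_{\sigma^2/M^2}(t)$ in probability.

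It then remains to turn this pointwise-in-$t$, in-probability convergence into uniform convergence. Because $\Phi_{\sigma^2/M^2}$ is continuous and both $t\mapsto P^\star(\cdot\le t)$ and $t\mapsto P(\cdot\le t)$ are monotone, I would fix $\epsilon>0$, choose a finite grid $t_0<\cdots<t_K$ on which $\Phi_{\sigma^2/M^2}$ increases by at most $\epsilon$ between consecutive points and which captures the tails, and use monotonicity to bound the supremum over all $t$ by the maximum over the grid plus $\epsilon$; the grid maximum tends to $0$ in probability as a maximum over finitely many convergent terms. A triangle inequality against the (deterministic) uniform convergence of the true side then gives the claim. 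I expect the main obstacle to lie in the bootstrap central limit theorem step: one must verify that Radulovi\'{c}'s hypotheses are met for the indicator variables and, more delicately, control the in-probability nature of the conditional convergence so that it can be coupled consistently with the in-probability negligibility of $R^\star_n$ supplied by Theorem~\ref{theo2}.
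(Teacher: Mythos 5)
Your proposal is correct and follows essentially the same route as the paper's own proof: the $\rho=1$ Bahadur representations from Theorems \ref{theo1} and \ref{theo2} reduce both quantile differences to (bootstrap) empirical distribution functions at $t_p$, Radulovi\'{c}'s Theorem 2 supplies the matching normal limits for the true and bootstrap sides, and the remainders are absorbed by a Slutsky-type argument together with uniformity from the continuity of the normal limit. Your write-up is in fact more explicit than the paper's (which leaves the P\'olya/grid uniformity step and the conditional handling of $R_n^\star$ implicit), but the underlying argument is the same.
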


The assumptions of this corollary cannot be weakend: It is well known that $l\rightarrow\infty$ and $n/l\rightarrow\infty$ are necessary for bootstrap consistency of the bootstrap. The mixing rate is the best known rate for the central limit theorem. As asymptotic normality might still hold for the bootstrap under slower mixing rates (see Peligrad (1998)), the bootstrap might be inconsistent. Finally, if $\rho\neq1$, the bootstrap also fails:

\begin{cor}\label{cor4} If the assumptions of Theorem \ref{theo2} hold with $\rho\neq 1$ and additionally $\lim_{n\rightarrow\infty}\var[\sqrt{n}F_n(t_p)]>0$, then
\begin{equation*}
\sup_{t\in\R}\left|P^\star\left(F^{\star-1}_n\left(p\right)-F^{-1}_n\left(p\right)\leq t\right)-P\left(F^{-1}_n\left(p\right)-t_p\leq t\right)\right|\xrightarrow{n\rightarrow\infty}Z_\rho
\end{equation*}
in distribution, where $Z_\rho$ is a non-degenerate (non-constant) random variable.
\end{cor}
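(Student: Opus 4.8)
The plan is to reduce the random Kolmogorov distance to a continuous functional of the single random variable $U_n := \sqrt{n}(p - F_n(t_p))$ and then invoke the continuous mapping theorem. Since the Kolmogorov distance is invariant under the monotone rescaling $t \mapsto n^{1/(2\rho)}t$, I would first replace the two unscaled distribution functions by their scaled counterparts. Writing $h(x) := M^{-1/\rho}|x|^{1/\rho}\sgn(x)$, Theorem \ref{theo1} gives $n^{1/(2\rho)}(F^{-1}_n(p) - t_p) = h(U_n) + o_P(1)$, while Theorem \ref{theo2} gives, conditionally on the sample, $n^{1/(2\rho)}(F^{\star-1}_n(p) - t_p) = h(U_n^\star) + o_{P^\star}(1)$ with $U_n^\star := \sqrt{n}(p - F_n^\star(t_p))$. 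Because the circular block bootstrap satisfies $E^\star[F_n^\star(t_p)] = F_n(t_p)$ exactly, one has $U_n^\star = U_n - V_n^\star$ with $V_n^\star := \sqrt{n}(F_n^\star(t_p) - F_n(t_p))$, so that the scaled bootstrapped quantile difference equals $h(U_n - V_n^\star) - h(U_n)$ up to $o_{P^\star}(1)$.

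Next I would bring in the two central limit theorems. Summability of the mixing coefficients together with boundedness of the indicators $\mathds{1}_{X_i \le t_p}$ yields, via Ibragimov's theorem, $U_n \xrightarrow{d} W_0$, where $W_0$ is centered normal with variance $\sigma^2 := \lim_n \var[\sqrt{n}F_n(t_p)] > 0$, the hypothesis guaranteeing $\sigma^2 > 0$. The bootstrap counterpart (Radulovi\'{c} (1996)), valid under $1/l + l/n \to 0$, gives that conditionally on the data and in probability $V_n^\star \xrightarrow{d} V$ with $V \sim \mathcal{N}(0,\sigma^2)$, the bootstrap long-run variance matching $\sigma^2$. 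Hence, conditionally, the bootstrap law of the scaled quantile difference converges to the law of $g_{U_n}(V) := h(U_n - V) - h(U_n)$, while the true scaled law converges to that of $h(W_0)$; both limit laws possess densities, so by P\'olya's theorem the convergence of the corresponding distribution functions is uniform.

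The decisive step is to decouple the two sources of randomness. For fixed $u$ the map $v \mapsto h(u - v) - h(u)$ is strictly decreasing and continuous (as $M>0$ makes $h$ strictly increasing), and the Kolmogorov distance between two laws is invariant under a common strictly monotone continuous transformation. Consequently the bootstrap distribution function of $h(U_n - V_n^\star) - h(U_n)$ stays within Kolmogorov distance $\sup_s|P^\star(V_n^\star \le s) - P(V \le s)|$ of that of $g_{U_n}(V)$, uniformly in the value of $U_n$; this quantity tends to $0$ in probability by the bootstrap CLT and P\'olya, and the $o_{P^\star}(1)$ remainder of Theorem \ref{theo2} is absorbed because the limiting law has no atoms. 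Combining this with the uniform convergence of the true distribution function, I obtain $\Delta_n = D(U_n) + o_P(1)$, where
\[
D(u) := \sup_{t\in\R}\bigl|P(g_u(V) \le t) - P(h(W_0) \le t)\bigr|
\]
is a deterministic function of $u$ alone. A routine weak-continuity argument shows $D$ is continuous, so the continuous mapping theorem together with Slutsky's lemma yields $\Delta_n \xrightarrow{d} D(W_0) =: Z_\rho$.

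It remains to check that $Z_\rho$ is non-degenerate when $\rho \neq 1$. At $u = 0$ one has $g_0(V) = -h(V) \stackrel{d}{=} h(W_0)$ by symmetry of the normal law, so $D(0) = 0$. For $|u| \to \infty$ a first-order expansion gives $g_u(V) \approx -\rho^{-1}M^{-1/\rho}|u|^{1/\rho - 1}V$; when $\rho > 1$ this degenerates to $0$ and when $\rho < 1$ it spreads to infinity, and in either case $D(u) \to 1/2$ because $h(W_0)$ is symmetric with $P(h(W_0) \le 0) = 1/2$. Thus $D$ is continuous and non-constant, and since $W_0$ has a strictly positive Lebesgue density, $Z_\rho = D(W_0)$ is not almost surely constant. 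The main obstacle I anticipate is precisely the decoupling in the third paragraph: one must route the $U_n$-dependence of the transformation out of the bootstrap-CLT term, via the monotone-invariance of the Kolmogorov distance, so that the only surviving dependence on the random $U_n$ is through the deterministic continuous functional $D$. It is exactly this retained dependence, absent when $\rho = 1$ where $h$ is linear and $g_u$ does not depend on $u$, that produces the non-degenerate limit.
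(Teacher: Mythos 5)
Your proposal is correct and follows essentially the same route as the paper's proof: reduce via Theorems \ref{theo1} and \ref{theo2} to the Kolmogorov distance between the conditional law of $g^{-1}(U_n-V_n^\star)-g^{-1}(U_n)$ and the law of $g^{-1}(U_n)$, pass to the limit using the Ibragimov and Radulovi\'{c} central limit theorems, and identify the limit as a deterministic functional $D$ evaluated at the normal limit of $U_n$ (the paper writes this as the conditional expression defining $Z_\rho$). Where you differ is in rigor rather than route, and to your credit: the paper merely asserts the convergence in distribution of the random supremum, while your decoupling via monotone-invariance of the Kolmogorov distance, continuity of $D$, and the continuous mapping theorem supply an actual argument; moreover, the paper's closing remark only shows $Z_\rho\neq 0$ almost surely, whereas your computation $D(0)=0$ together with $D(u)\rightarrow\frac{1}{2}$ as $|u|\rightarrow\infty$, continuity of $D$, and the full support of the normal limit genuinely proves the claimed non-constancy of $Z_\rho$.
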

The reason for the inconsistency in the case $\rho\neq 1$ is that the shape of the distribution function $F$ at $F^{-1}_n\left(p\right)$ (the centering for the bootstrap quantile) is different from the shape of $F$ at $t_p$ (the centering for the empirical quantile). If $F$ is differentiable ($\rho=1$), the shape at the two points is asymptotically the same, as $F^{-1}_n\left(p\right)\rightarrow t_p$.

We also want to establish the strong consistency and we need slightly stronger conditions on the mixing coefficients and the block length:
\begin{theo}\label{theo3} Let $\left(X_n\right)_{n\in\Z}$ be a stationary, strongly mixing sequence of random variables with distribution function $F$ which is differentiable in $t_p$ with positive derivative $f$. We assume that the mixing coefficients satisfy $\alpha(n)=O(n^{-1-\epsilon})$  for an $\epsilon>0$. Furthermore, choose the block length in such a way that for some constants $C_1,C_2,\epsilon_1>0$
\begin{equation*}
C_1n^{\epsilon_1}\leq l_n \leq C_2n^{1-\epsilon_1}
\end{equation*}
and for all $k\in\N$
\begin{equation*}
l_{2^k}=l_{2^k+1}=\ldots=l_{2^{k+1}-1}.
\end{equation*}
Then
\begin{equation*}
F^{\star-1}_n\left(p\right)-F_n^{-1}(p)=\frac{F_n(t_p)-F_n^\star\left(t_p\right)}{f(t_p)}+R_n^\star,
\end{equation*}
where $R_n^\star=o_P^\star(n^{-\frac{1}{2}})$ almost surely.
\end{theo}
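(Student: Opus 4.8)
The plan is to prove two strong Bahadur-type representations, one for the original sample quantile and one for its bootstrap analogue, both centred at $t_p$, and then to subtract them. Write $\xi_n:=F_n^{-1}(p)$ and $\xi_n^\star:=F_n^{\star-1}(p)$, and recall that for the circular block bootstrap the periodic extension makes each resampled observation uniform on $X_1,\dots,X_n$, so that $E^\star[F_n^\star(t)]=F_n(t)$ for every $t$. The target identity is equivalent, after dividing by $f(t_p)$ and using $F(t_p)=p$, to
\begin{equation*}
\xi_n-t_p=\frac{p-F_n(t_p)}{f(t_p)}+o(n^{-1/2})\ \text{a.s.}, \qquad \xi_n^\star-t_p=\frac{p-F_n^\star(t_p)}{f(t_p)}+o^\star_P(n^{-1/2})\ \text{a.s.},
\end{equation*}
since subtracting the first from the second and noting $(p-F_n^\star(t_p))-(p-F_n(t_p))=F_n(t_p)-F_n^\star(t_p)$ yields exactly the claimed identity with $R_n^\star=o^\star_P(n^{-1/2})$ a.s. Both representations will be obtained by the same mechanism: localise the relevant quantile in a shrinking neighbourhood $|t-t_p|\le c_n$ with $c_n=n^{-1/2+\gamma}$, replace the empirical (resp.\ bootstrap) distribution function by its linearisation there, and control the oscillation of the centred process.

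For the first representation I would strengthen the in-probability oscillation bound underlying Theorem \ref{theo1} to an almost sure one, namely
\begin{equation*}
\sup_{|t-t_p|\le c_n}\sqrt{n}\,\bigl|(F_n-F)(t)-(F_n-F)(t_p)\bigr|\longrightarrow0\quad\text{a.s.}
\end{equation*}
Since the increment variance of the indicator differences over $[t_p-c_n,t_p+c_n]$ is $O(c_n)$, a maximal inequality for strongly mixing partial sums together with $\alpha(n)=O(n^{-1-\epsilon})$ gives a polynomial tail bound that is summable along $n=2^k$; Borel--Cantelli and monotonicity then upgrade this to the a.s.\ bound for all $n$. Combined with strong consistency of the sample quantile (the ergodic theorem gives $\xi_n\to t_p$ a.s., and the rate $\xi_n-t_p=O(n^{-1/2}\sqrt{\log\log n})=o(c_n)$ a.s.\ keeps $\xi_n$ inside the window) and the differentiability $F(t)-F(t_p)=f(t_p)(t-t_p)+o(t-t_p)$, evaluating $F_n(\xi_n)=p+O(1/n)$ yields the first representation.

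The second representation is the heart of the matter. Conditionally on the sample, the $b=\lfloor n/l\rfloor$ resampled blocks are i.i.d., so the centred bootstrap process $(F_n^\star-F_n)(t)-(F_n^\star-F_n)(t_p)$ is an average of $b$ independent mean-zero block contributions whose conditional variance is controlled by $\sup_{|t-t_p|\le c_n}|F_n(t)-F_n(t_p)|=O(c_n)$ a.s. A conditional Bernstein inequality, with a chaining argument over $t$, then bounds
\begin{equation*}
P^\star\Bigl(\sup_{|t-t_p|\le c_n}\sqrt{n}\,\bigl|(F_n^\star-F_n)(t)-(F_n^\star-F_n)(t_p)\bigr|>\delta\Bigr)
\end{equation*}
by an exponentially small, sample-measurable quantity. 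The lower bound $l_n\ge C_1 n^{\epsilon_1}$ keeps the block bias negligible and the upper bound $l_n\le C_2 n^{1-\epsilon_1}$ guarantees $b\to\infty$, so this conditional bound tends to $0$; fed, together with the rate $\xi_n^\star-t_p=O^\star_P(n^{-1/2})$ a.s.\ (from a conditional Chebyshev bound on $F_n^\star(t_p)-F_n(t_p)$ and monotonicity) and the differentiability of $F$, into $F_n^\star(\xi_n^\star)=p+O(1/(bl))$, it produces the second representation in probability.

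The remaining and hardest task is to turn the conditional oscillation bound into an \emph{almost sure} statement, i.e.\ to show that for a.e.\ sample $P^\star(\cdots>\delta)\to0$. I would do this by proving that the sample-measurable exponential bounds above are summable along the dyadic subsequence $n=2^k$ — here the two-sided polynomial window for $l_n$ makes the exponent grow like a positive power of $n$, which is summable in $k$ — apply Borel--Cantelli to obtain a.s.\ convergence along $n=2^k$, and then interpolate to all intermediate $n$. This last step is precisely the purpose of the hypothesis $l_{2^k}=\dots=l_{2^{k+1}-1}$: holding the block length fixed on each dyadic window makes the resampling mechanisms for neighbouring sample sizes compatible, so that monotonicity of $F_n^\star$ in $t$ and the at-most-factor-two change of $n$ let the supremum over $2^k\le n<2^{k+1}$ be dominated by its dyadic endpoints. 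I expect the principal obstacle to be exactly this simultaneous control of the two sources of randomness — the bootstrap law, through the conditional i.i.d.-block concentration inequality, and the sample law, through summability of the bad-sample probabilities — and it is the combination of the mixing rate $\alpha(n)=O(n^{-1-\epsilon})$ with the dyadic block-length convention that renders both summable at once.
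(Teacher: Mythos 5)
The step that actually fails is your conditional variance bound for the resampled blocks. You claim the i.i.d.\ block contributions to the centred bootstrap process have conditional variance ``controlled by $\sup_{|t-t_p|\le c_n}|F_n(t)-F_n(t_p)|=O(c_n)$ a.s.''; note that nothing in this claim uses the mixing of the underlying sequence, and without mixing it is false. Writing $Y_n(i)=\mathds{1}_{\{t_p<X_i\le t\}}-(F_n(t)-F_n(t_p))$, the conditional second moment of one block is $\frac{1}{n}\sum_{j=1}^n\bigl(\sum_{i=j}^{j+l-1}Y_n(i)\bigr)^2$, and absent cancellation \emph{within} blocks this is genuinely of order $l^2\,(F_n(t)-F_n(t_p))$ rather than $l\,(F_n(t)-F_n(t_p))$ (think of a sample in which the observations of each block cluster together, so a block's indicators flip all at once). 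The conditional variance of $\sqrt{bl}\,\bigl((F_n^\star-F_n)(t)-(F_n^\star-F_n)(t_p)\bigr)$ is then only bounded by something of order $l\,c_n\le C n^{1/2-\epsilon_1+\gamma}$, which diverges for the admissible block lengths $l_n\le C_2 n^{1-\epsilon_1}$ with $\epsilon_1<1/2$. The within-block cancellation must come from the mixing assumption applied to the original sequence, and this is exactly where the paper does its real work: Davydov's inequality gives block second moments \emph{linear} in the block length, $E\bigl(\sum_{i=1}^m Y_n(i)\bigr)^2\le Cm\,a_n^{-\epsilon/(4+\epsilon)}$, M\'oricz's maximal inequality controls the maximum over block lengths and starting points, and Chebyshev plus Borel--Cantelli along the dyadic ranges $2^k\le n<2^{k+1}$ (the only place the hypothesis $l_{2^k}=\dots=l_{2^{k+1}-1}$ is needed) makes the resulting bound almost sure. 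Your Bernstein-plus-chaining argument needs precisely this input (Bernstein also requires an a.s.\ bound on the block summands), and so does your ``conditional Chebyshev bound'' for $F_n^\star(t_p)-F_n(t_p)$: that its conditional variance is a.s.\ $O(1/n)$ is not elementary; it is essentially Theorem 2.4 of Shao and Yu, which the paper cites for exactly this purpose.

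There is also a structural overshoot. Because the term $A_n:=F_n^{-1}(p)-t_p-(p-F_n(t_p))/f(t_p)$ is sample-measurable, hence a constant under $P^\star$, your decomposition $R_n^\star=B_n^\star-A_n$ delivers the theorem only if $\sqrt{n}A_n\to0$ \emph{almost surely}, i.e.\ only if you first prove a strong Bahadur--Ghosh representation for the sample quantile under the bare rate $\alpha(n)=O(n^{-1-\epsilon})$. That is a major unproven component: the almost sure representations in the literature (Babu--Singh, Yoshihara, Sun, Wendler) all require far stronger mixing rates, and your sketch of it (a.s.\ uniform oscillation over a shrinking window plus an LIL rate for $\xi_n$) is a separate project whose exponents must be tuned against a possibly tiny $\epsilon$. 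The paper's formulation and proof avoid this entirely: the remainder is measured in bootstrap probability with the sample frozen, the argument is Ghosh's Lemma \ref{lem1} applied under $P^\star$, tightness is imported from Shao and Yu's almost sure CLT for the bootstrapped mean, and only \emph{pointwise-in-$t$} convergence of the oscillation term $\tilde Z_{t,n}$ is required --- no supremum over a window, no chaining, and no almost sure rate for $F_n^{-1}(p)$ enter at all.
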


With this Bahadur-Ghosh representation and Theorem 2.4 of Shao and Yu (1993), the strong consistency of the bootstrap follows easily:

\begin{cor}\label{cor5} If the assumptions of Theorem \ref{theo3} hold and $\lim_{n\rightarrow\infty}\var[\sqrt{n}F_n(t_p)]>0$, then
\begin{equation*}
\sup_{t\in\R}\left|P^\star\left(F^{\star-1}_n\left(p\right)-F^{-1}_n\left(p\right)\leq t\right)-P\left(F^{-1}_n\left(p\right)-t_p\leq t\right)\right|\xrightarrow{n\rightarrow\infty}0
\end{equation*}
almost surely.
\end{cor}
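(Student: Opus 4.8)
The plan is to reduce both distribution functions appearing in the supremum to one and the same Gaussian limit by means of the linear (Bahadur--Ghosh) representations, and then to upgrade the bootstrap convergence from ``in probability'' to ``almost surely'' by invoking the strong block-bootstrap central limit theorem of Shao and Yu (1993). Write $\sigma^2:=\lim_{n\to\infty}\var[\sqrt{n}F_n(t_p)]>0$, let $G$ denote the distribution function of the $N\bigl(0,\sigma^2/f(t_p)^2\bigr)$ law, and abbreviate $D_n^\star:=F^{\star-1}_n(p)-F^{-1}_n(p)$ and $D_n:=F^{-1}_n(p)-t_p$.

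First I would treat the deterministic (non-bootstrap) term. The hypotheses of Theorem \ref{theo3} contain those of Theorem \ref{theo1} in the case $\rho=1$, $M=f(t_p)$: differentiability at $t_p$ is exactly $\rho=1$, and $\alpha(n)=O(n^{-1-\epsilon})$ gives $\sum_{n=1}^\infty\alpha(n)<\infty$. Hence Theorem \ref{theo1b} applies and $\sqrt{n}\,D_n$ converges in distribution to $N\bigl(0,\sigma^2/f(t_p)^2\bigr)$, the variance being $\sigma^2/f(t_p)^2$ because the linearization $\sqrt{n}D_n=\sqrt{n}(p-F_n(t_p))/f(t_p)+o_P(1)$ carries the factor $1/f(t_p)^2$. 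Since $\sigma^2>0$, the limit law has a continuous and strictly increasing distribution function $G$, so by Pólya's theorem $\sup_{t\in\R}\bigl|P(D_n\le t)-G(t)\bigr|\to0$ as $n\to\infty$.

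Next I would handle the bootstrap term. By the construction of the circular block bootstrap one has $E^\star[F_n^\star(t_p)]=F_n(t_p)$, so $\sqrt{n}\bigl(F_n(t_p)-F_n^\star(t_p)\bigr)$ is precisely the centered block-bootstrap partial sum of the bounded variables $\mathds{1}_{\{X_i\le t_p\}}$. Theorem 2.4 of Shao and Yu (1993) is a strong consistency result for the block-bootstrap CLT of sample means of strongly mixing sequences; I would verify that its hypotheses match ours exactly --- boundedness of the summands, the mixing rate $\alpha(n)=O(n^{-1-\epsilon})$, and the block-length regime $C_1n^{\epsilon_1}\le l_n\le C_2n^{1-\epsilon_1}$ together with the dyadic constancy $l_{2^k}=\cdots=l_{2^{k+1}-1}$. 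Because the bootstrap long-run variance converges to the same $\sigma^2$, this yields, for almost every realization of $(X_n)_{n\in\Z}$,
\begin{equation*}
\sup_{t\in\R}\Bigl|P^\star\Bigl(\tfrac{\sqrt{n}(F_n(t_p)-F_n^\star(t_p))}{f(t_p)}\le t\Bigr)-G(t)\Bigr|\xrightarrow{n\to\infty}0.
\end{equation*}
By Theorem \ref{theo2} we have $\sqrt{n}D_n^\star=\sqrt{n}(F_n(t_p)-F_n^\star(t_p))/f(t_p)+\sqrt{n}R_n^\star$ with $\sqrt{n}R_n^\star\to0$ in $P^\star$-probability almost surely, so a Slutsky argument together with the continuity of $G$ (and Pólya's theorem) lets the remainder be absorbed uniformly in $t$, giving $\sup_{t\in\R}\bigl|P^\star(D_n^\star\le t)-G(t)\bigr|\to0$ almost surely.

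Finally I would combine the two pieces by the triangle inequality
\begin{equation*}
\sup_{t\in\R}\bigl|P^\star(D_n^\star\le t)-P(D_n\le t)\bigr|\le\sup_{t\in\R}\bigl|P^\star(D_n^\star\le t)-G(t)\bigr|+\sup_{t\in\R}\bigl|G(t)-P(D_n\le t)\bigr|,
\end{equation*}
where the first term tends to $0$ almost surely by the bootstrap step and the second tends to $0$ deterministically by the first step; this proves the claim. The main obstacle is the passage from convergence in probability to almost sure convergence, which is exactly what separates this corollary from Corollary \ref{cor3}: one must confirm that the block-length conditions of Theorem \ref{theo3}, and in particular the dyadic constancy (designed to permit a blocking-plus-Borel--Cantelli argument along powers of two), line up precisely with the hypotheses of Shao and Yu's strong bootstrap CLT, and that the $o_{P^\star}(n^{-1/2})$ remainder of Theorem \ref{theo3} is controlled for almost every realization so that the Slutsky step can be carried out uniformly in $t$.
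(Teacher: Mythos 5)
Your proposal is correct and is essentially the paper's own argument: the paper offers no separate proof of Corollary \ref{cor5}, stating only that it follows easily from the Bahadur--Ghosh representation of Theorem \ref{theo3} together with Theorem 2.4 of Shao and Yu (1993), which is exactly your scheme (reduce both distribution functions to a common Gaussian limit, apply P\'olya's theorem, and combine by the triangle inequality), and it parallels the paper's proof of Corollary \ref{cor3} with Radulovi\'c's weak bootstrap CLT replaced by Shao--Yu's strong one. The only blemish is a mis-citation in the bootstrap step: the representation you actually write down --- centered at $F_n^{-1}(p)$, with remainder $o_{P^\star}(n^{-1/2})$ holding almost surely --- is that of Theorem \ref{theo3}, not Theorem \ref{theo2} (which centers at $t_p$ and gives only an in-probability remainder, so it could never yield the almost sure conclusion); your closing paragraph attributes this correctly, so it is a slip of reference rather than a gap.
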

Compared to Theorem 3.1 of Sun and Lahiri (2006), our assumptions on the mixing coefficient $\alpha(n)$, on the distribution function $F$ and on the block length $l$ are weaker. They assume that $\alpha(n)=O(n^{-\alpha})$ for an $\alpha>9,5$, that $l=o(n^{\frac{1}{2}})$ and that $F$ is continuously differentiable in a neighborhood of $t_p$.

\section{Proofs}

In the proofs, $C$ denotes an arbitrary constant, which may have different values from line to line and may depend on several other values, but not on $n\in\N$. We use the following lemma proved by Ghosh (1971):

\begin{lem}\label{lem1} Let $(V_n)_{n\in\N}$ and  $(W_n)_{n\in\N}$ be two sequences of random variables, such that
\begin{enumerate}
\item the sequence $(W_n)_{n\in\N}$ is tight,
\item For all $k\in\R$ and $\epsilon>0$
\begin{align*}
\lim_{n\rightarrow\infty}P\left(V_n\leq k, W_n\geq k+\epsilon\right)&=0\\
\lim_{n\rightarrow\infty}P\left(V_n \geq k+\epsilon, W_n\leq k\right)&=0.
\end{align*}
\end{enumerate}
Then $V_n-W_n\rightarrow0$ in probabality as $n\rightarrow\infty$.
\end{lem}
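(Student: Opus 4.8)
The plan is to prove directly that $P(|V_n - W_n| > \delta) \to 0$ for every fixed $\delta > 0$, exploiting the two one-sided hypotheses in (2) to control the tails $\{V_n - W_n > \delta\}$ and $\{W_n - V_n > \delta\}$ separately, and using the tightness in (1) to reduce the a priori infinitely many thresholds $k$ appearing in (2) to a finite collection. First I would fix $\delta > 0$ and an auxiliary $\eta > 0$. By the tightness of $(W_n)_{n\in\N}$ there is a compact interval $[-A,A]$ with $\sup_n P(W_n \notin [-A,A]) < \eta$. I then set $\epsilon := \delta/2$ and lay a finite grid $-A = k_0 < k_1 < \cdots < k_m = A$ of mesh $k_{i+1} - k_i \le \epsilon$ over this interval, so that $m \le 2A/\epsilon + 1$ cells suffice.

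The key step is a cellwise domination. On the event $\{W_n \in [k_i,k_{i+1}]\} \cap \{V_n - W_n > \delta\}$, put $k := k_{i+1}$: then $W_n \le k$, while $V_n > W_n + \delta \ge k_i + \delta \ge (k-\epsilon) + 2\epsilon = k + \epsilon$, so this event is contained in $\{V_n \ge k+\epsilon,\, W_n \le k\}$, whose probability vanishes by the second limit in (2). Symmetrically, on $\{W_n \in [k_i,k_{i+1}]\} \cap \{W_n - V_n > \delta\}$, put $k := k_i - \epsilon$: then $W_n \ge k_i = k + \epsilon$, while $V_n < W_n - \delta \le k_{i+1} - 2\epsilon \le k_i - \epsilon = k$, so this event is contained in $\{V_n \le k,\, W_n \ge k+\epsilon\}$, whose probability vanishes by the first limit in (2). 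Note that both thresholds are simply real numbers, so the hypotheses apply even when $k$ falls outside $[-A,A]$.

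Summing each of these two families over the finitely many cells $i = 0, \ldots, m-1$ gives $P(|V_n - W_n| > \delta,\, W_n \in [-A,A]) \to 0$, since a finite sum of vanishing terms vanishes. Combining with the tightness estimate yields $\limsup_{n\to\infty} P(|V_n - W_n| > \delta) \le \eta$, and as $\eta > 0$ was arbitrary the left-hand side equals $0$, which is the assertion.

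The only delicate point is that the hypotheses in (2) are quantified at one fixed $k$ at a time and so cannot be applied uniformly over the unbounded range of $W_n$; tightness is precisely what lets me confine $W_n$ to a compact set off an event of small probability and thereby invoke (2) at only finitely many thresholds. The remaining work — bookkeeping of strict versus non-strict inequalities at the cell boundaries — is routine and can be absorbed by shrinking the mesh slightly below $\epsilon$.
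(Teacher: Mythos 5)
Your proof is correct. Note that the paper itself gives no proof of this lemma --- it is stated with a citation to Ghosh (1971) --- and your argument (tightness to confine $W_n$ to a compact interval, a finite grid of mesh $\delta/2$, and the two one-sided hypotheses applied at the cell endpoints $k=k_{i+1}$ and $k=k_i-\epsilon$) is exactly the standard argument from that source, carried out with the bookkeeping done correctly.
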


\begin{proof}[Proof of Theorem \ref{theo1}] The proof follows the ideas of Ghosh (1971). We set $g(x)=M|x|^\rho\sgn(x)$ and $W_n=g^{-1}(\sqrt{n}(p-F_n(t_p)))$. By Theorem 1.6 of Ibragimov (1962), $\sqrt{n}(p-F_n(t_p))$ converges to a normal limit and thus $(W_n)_{n\in\N}$ is tight. We define $V_n=n^{\frac{1}{2\rho}}(F^{-1}_n\left(p\right)-t_p)$ and $Z_{t,n}=g^{-1}(\sqrt{n}(F(t_p+\frac{t}{n^{1/2\rho}})-F_n(t_p+\frac{t}{n^{1/2\rho}})))$. We have that by the definition of the generalized inverse
\begin{equation*}
\left\{V_n\leq t\right\}=\left\{p\leq F_n(t_p+\frac{t}{n^{1/2\rho}})\right\}=\left\{Z_{t,n}\leq t_n\right\}
\end{equation*}
where $t_n:=g^{-1}(\sqrt{n}(F(t_p+\frac{t}{n^{1/2\rho}})-p))$. By our assumptions on $F$, for all $t\in\R$ we have that $t_n\rightarrow t$ as $n\rightarrow\infty$. We assumed that $\sum_{k=1}^\infty \alpha(k)<\infty$. By a well-known covariance inequality $\cov\left(\mathds{1}_{\{t_p<X_1\leq t_p+\frac{t}{n^{1/2\rho}}\}},\mathds{1}_{\{t_p<X_k\leq t_p+\frac{t}{n^{1/2\rho}}\}}\right)\leq 4\alpha(k-1)$, so we have that
\begin{multline*}
E\left(\sqrt{n}\left(p-F_n(t_p)-F(t_p+\frac{t}{n^{1/2\rho}})+F_n(t_p+\frac{t}{n^{1/2\rho}})\right)\right)^2\\
\leq 2\sum_{k=1}^\infty\left|\cov\left(\mathds{1}_{\{t_p<X_1\leq t_p+\frac{t}{n^{1/2\rho}}\}},\mathds{1}_{\{t_p<X_k\leq t_p+\frac{t}{n^{1/2\rho}}\}}\right)\right|\\
\shoveleft\leq 2\sum_{k=1}^{\lfloor n^{\frac{1}{4}}\rfloor}\var\left(\mathds{1}_{\{t_p<X_1\leq t_p+\frac{t}{n^{1/2\rho}}\}}\right)+8\sum_{k=\lfloor n^{\frac{1}{4}}\rfloor}^\infty\alpha(k)\\
\leq 2n^{\frac{1}{4}}\left|F(t_p+\frac{t}{n^{1/2\rho}})-p\right|+8\sum_{k=\lfloor n^{\frac{1}{4}}\rfloor}^\infty\alpha(k)\xrightarrow{n\rightarrow\infty}0,
\end{multline*}
so $\sqrt{n}(p-F_n(t_p)-F(t_p+\frac{t}{n^{1/2\rho}})+F_n(t_p+\frac{t}{n^{1/2\rho}}))\rightarrow0$ in probability and consequently $Z_{t,n}-W_n\rightarrow0$ in probability as $n\rightarrow\infty$. So
\begin{multline*}
\lim_{n\rightarrow\infty}P\left(V_n\leq t, W_n\geq t+\epsilon\right)=\lim_{n\rightarrow\infty}P\left(Z_{t,n}\leq t_n, W_n\geq t+\epsilon\right)\\
\leq \lim_{n\rightarrow\infty}P\left(Z_{t,n}\leq t+\frac{\epsilon}{2}, W_n\geq t+\epsilon\right)=0.
\end{multline*}
Lemma \ref{lem1} completes the proof.
\end{proof}

We omit the proof of Theorem \ref{theo1b}, as we think it is an obvious consequence of Theorem \ref{theo1} and the continuous mapping theorem.

\begin{proof}[Proof of Theorem \ref{theo2}] We will use a similar method as in the proof of Theorem \ref{theo1}. We define
\begin{align*}
W_n^\star&:=g^{-1}\left(\sqrt{bl}(p-F_n^\star(t_p))\right),\\
V_n^\star&:=(bl)^{\frac{1}{2\rho}}(F^{-1}_n\left(p\right)-t_p),\\
Z_{t,n}^\star&:=g^{-1}\left(\sqrt{bl}(F(t_p+\frac{t}{n^{1/2\rho}})-F_n^\star(t_p+\frac{t}{n^{1/2\rho}}))\right)
\end{align*}
Note that $\sqrt{bl}(F_n(t_p)-F_n^\star(t_p))$ converges to a normal limit by Theorem 2 of Radulovi\'{c} (1996), so the sequence $(\sqrt{bl}((p-F_n(t_p))+(F_n(t_p)-F_n^\star(t_p)))_{n\in\N}$ is tight and consequently the sequence $(W_n^\star)_{n\in\N}$ is also tight. It remains to show for any $t\in\R$ that $Z_{t,n}^\star-W_n^\star\rightarrow0$ in probability as $n\rightarrow\infty$. By the construction of the circular block bootstrap $E^\star F_n^\star(t)=F_n(t)$, so
\begin{align*}
&EE^\star\left(\sqrt{bl}\left(F(t_p+\frac{t}{n^{1/2\rho}})-F_n^\star(t_p+\frac{t}{n^{1/2\rho}})-p+F_n^\star(t_p)\right)\right)^2\\
=&EE^\star\left(\sqrt{bl}\left(F_n(t_p+\frac{t}{n^{1/2\rho}})-F_n^\star(t_p+\frac{t}{n^{1/2\rho}})-F_n(t_p)+F_n^\star(t_p)\right)\right)^2\\
&\ \ \ +E\left(\sqrt{bl}\left(F(t_p+\frac{t}{n^{1/2\rho}})-F_n(t_p+\frac{t}{n^{1/2\rho}})-p+F_n(t_p)\right)\right)^2.
\end{align*}
In the proof of Theorem \ref{theo1}, we have already shown that the second summand converges to zero. For the first summand, we conclude from the conditional independence of the resampled blocks and the definition of empirical distribution function
\begin{multline*}
EE^\star\left(\sqrt{bl}\left(F_n(t_p+\frac{t}{n^{1/2\rho}})-F_n^\star(t_p+\frac{t}{n^{1/2\rho}})-F_n(t_p)+F_n^\star(t_p)\right)\right)^2\\
=lEE^\star\left(F_n(t_p+\frac{t}{n^{1/2\rho}})-\frac{1}{l}\sum_{i=1}^l\mathds{1}_{\{X_i^\star\leq t_p+\frac{t}{n^{1/2\rho}}\}}-F_n(t_p)+\frac{1}{l}\sum_{i=1}^l\mathds{1}_{\{X_i^\star\leq t_p\}}\right)^2.
\end{multline*}
With probability $\frac{1}{n}$, we have $(X_1^\star,\ldots,X_l^\star)=(X_{j+1},\ldots,X_{j+l})$ (with $X_{i}=X_{i-n}$ for $i>n$), so
\begin{multline*}
lEE^\star\left(F_n(t_p+\frac{t}{n^{1/2\rho}})-\frac{1}{l}\sum_{i=1}^l\mathds{1}_{\{X_i^\star\leq t_p+\frac{t}{n^{1/2\rho}}\}}-F_n(t_p)+\frac{1}{l}\sum_{i=1}^l\mathds{1}_{\{X_i^\star\leq t_p\}}\right)^2\\
=\frac{l}{n}\sum_{j=1}^nE\left(F_n(t_p+\frac{t}{n^{1/2\rho}})-\frac{1}{l}\sum_{i=1}^l\mathds{1}_{\{X_{j+1}\leq t_p+\frac{t}{n^{1/2\rho}}\}}-F_n(t_p)+\frac{1}{l}\sum_{i=1}^l\mathds{1}_{\{X_{j+i}\leq t_p\}}\right)^2\\
\shoveleft\leq 2lE\left(F_n(t_p+\frac{t}{n^{1/2\rho}})-F(t_p+\frac{t}{n^{1/2\rho}})-F_n(t_p)+F(t_p)\right)^2\\
+2lE\left(F_l(t_p+\frac{t}{n^{1/2\rho}})-F(t_p+\frac{t}{n^{1/2\rho}})-F_l(t_p)+F(t_p)\right)^2.
\end{multline*}
These two summands converge to 0 as in the proof of Theorem \ref{theo1}, which completes the proof.
\end{proof}

\begin{proof}[Proof of Corollary \ref{cor3}] By Theorem 2 of Radulovi\'{c} (1996)
\begin{equation*}
\sup_{t\in\R}\left|P^\star\left(\sqrt{n}(F_n(t_p)-p)\leq t\right)-P\left(Y\leq t\right)\right|\xrightarrow{n\rightarrow\infty}0
\end{equation*}
and
\begin{equation*}
\sup_{t\in\R}\left|P^\star\left(\sqrt{bl}(F_n^\star(t_p)-F_n(t_p))\leq t\right)-P\left(Y\leq t\right)\right|\xrightarrow{n\rightarrow\infty}0
\end{equation*}
in probability for some normal random variable $Y$. Furthermore by Theorems \ref{theo1} and \ref{theo2}
\begin{equation*}
F^{-1}_n\left(p\right)-t_p=\frac{p-F_n\left(t_p\right)}{M}+R_n
\end{equation*}
and
\begin{multline*}
F^{\star-1}_n\left(p\right)-F^{-1}_n\left(p\right)=\left(F^{\star-1}_n\left(p\right)-t_p\right)-\left(F^{-1}_n\left(p\right)-t_p\right)\\
=\frac{p-F_n^\star\left(t_p\right)}{M}-\frac{p-F_n\left(t_p\right)}{M}+R_n^\star-R_n=\frac{F_n\left(t_p\right)-F_n^\star\left(t_p\right)}{M}+R_n^\star-R_n,
\end{multline*}
where $R_n=o_P(n^{-\frac{1}{2}})$ and $R_n^\star=o_P(n^{-\frac{1}{2}})$. So we can conclude that
\begin{multline*}
\sup_{t\in\R}\left|P^\star\left(F^{\star-1}_n\left(p\right)-F^{-1}_n\left(p\right)\leq t\right)-P\left(F^{-1}_n\left(p\right)-t_p\leq t\right)\right|\\
\shoveleft\leq \sup_{t\in\R}\left|P^\star\left(\sqrt{bl}(F^{\star-1}_n\left(p\right)-F^{-1}_n\left(p\right))\leq t\right)-P\left(\frac{-Y}{M}\leq t\right)\right|\\
+\sup_{t\in\R}\left|P^\star\left(\sqrt{n}(F_n\left(p\right)-t_p)\leq t\right)-P\left(-\frac{Y}{M}\leq t\right)\right|\xrightarrow{n\rightarrow\infty}0
\end{multline*}
in probability.
\end{proof}

\begin{proof}[Proof of Corollary \ref{cor4}] By Theorems \ref{theo1} and \ref{theo2}, we have that
\begin{equation*}
F_n^{\star-1}(p)-F_n^{-1}(p)=g^{-1}(p-F_n^\star\left(t_p\right))-g^{-1}(p-F_n\left(t_p\right))+R_n+R_n^\star
\end{equation*}
with $R_n+R_n^\star=o_P(n^{-\frac{1}{2\rho}})$, so
\begin{multline*}
\sup_{t\in\R}\left|P^\star\left(n^{\frac{1}{2\rho}}(F_n^{\star-1}(p)-F_n^{-1}(p))\leq t\right)\right.\\
\left.-P^\star\left(g^{-1}(\sqrt{n}(p-F_n^\star\left(t_p\right)))-g^{-1}(\sqrt{n}(p-F_n\left(t_p\right)))\leq t\right)\right|\xrightarrow{n\rightarrow\infty}0
\end{multline*}
in probability. Furthermore
\begin{equation*}
\sup_{t\in\R}\left|P\left(n^{\frac{1}{2\rho}}(F_n^{-1}(p)-t_p)\leq t\right)-P\left(g^{-1}(\sqrt{n}(p-F_n\left(t_p\right)))\leq t\right)\right|\xrightarrow{n\rightarrow\infty}0.
\end{equation*}
So it follows that
\begin{multline*}
\lim_{n\rightarrow\infty}\sup_{t\in\R}\left|P^\star\left(n^{\frac{1}{2\rho}}(F_n^{\star-1}(p)-F_n^{-1}(p))\leq t\right)-P\left(n^{\frac{1}{2\rho}}(F_n^{-1}(p)-t_p)\leq t\right)\right|\\
=\lim_{n\rightarrow\infty}\sup_{t\in\R}\left| P^\star\left(g^{-1}(\sqrt{n}(p-F_n^\star\left(t_p\right)))-g^{-1}(\sqrt{n}(p-F_n\left(t_p\right)))\leq t\right)\right.\\
\left.-P\left(g^{-1}(\sqrt{n}(p-F_n\left(t_p\right)))\leq t\right) \right|
\end{multline*}
To investigate the latter, note that by Theorem 2 of Radulovic (1996), $\sqrt{n}(p-F_n\left(t_p\right))$ and $\sqrt{n}(F_n\left(t_p\right)-F_n^\star\left(t_p\right))$ converge in distribution to two independent normal random variables $W_1$ and $W_2$ with the same variance. So
\begin{multline*}
\sup_{t\in\R}\left| P^\star\left(g^{-1}(\sqrt{n}(p-F_n^\star\left(t_p\right)))-g^{-1}(\sqrt{n}(p-F_n\left(t_p\right)))\leq t\right)\right.\\
\left.-P\left(g^{-1}(\sqrt{n}(p-F_n\left(t_p\right)))\leq t\right) \right|\\
\xrightarrow{n\rightarrow\infty}\sup_{t\in\R}\left| P\left(g^{-1}(W_1+W_2)-g^{-1}(W_2)\leq t|W_2\right)-P\left(g^{-1}(W_1)\leq t\right) \right|=:Z_{\rho}
\end{multline*}
in distribution, where $W_1$ and $W_2$ are two independent normal random variables. As the functions $x\rightarrow g^{-1}(x+y)-g^{-1}(y)$ and $x\rightarrow g^{-1}(x)$ are not identical for $y\neq0$, the random variable $Z_\rho$ is not $0$.
\end{proof}

\begin{proof}[Proof of Theorem \ref{theo3}] We define $a_{n}=2^k$ for the $k\in\N$ such that $2^k\leq n < 2^{k+1}$.
\begin{align*}
\tilde{W}_n:=&\frac{1}{\sqrt{a_n}}\sum_{i=1}^n\left(F_n(t_p)-\mathds{1}_{\{X_i^\star\leq t_p\}}\right),\\
\tilde{Z}_{t,n}:=&\frac{1}{\sqrt{a_n}}\sum_{i=1}^n\left(F_n(t_p+\frac{t}{\sqrt{a_n}})-\mathds{1}_{\{X_i^\star\leq t_p+\frac{t}{\sqrt{a_n}}\}}-F_n(t_p)+\mathds{1}_{\{X_i^\star\leq t_p\}}\right).
\end{align*}
Following the arguments of the proof of Theorem \ref{theo1}, we only have to show that the sequence $(\tilde{W}_n)_{n\in\N}$ is tight and that $\tilde{Z}_{t,n}\rightarrow 0$ in bootstrap probability for all $t\in\R$ almost surely. By Theorem 2.4 of Shao and Yu (1993), $(\sqrt{n}(F_n(t_p)-F_n^\star(t_p)))_{n\in\N}$ is almost surely asymptotically normal and thus $(\tilde{W}_n)_{n\in\N}$ is tight.

First note that by the construction of the bootstrap random variables, the summands of $\tilde{Z}_{t,n}$ are independent conditional on $X_1,\ldots,X_n$ when the indices $i$ lie in different blocks. Additionally, the random variables are centered in bootstrap probability and the sequence of blocks are stationary for fixed $n$. So
\begin{multline*}
E^\star\left(\tilde{Z}_{t,n}\right)^2\\
=\lfloor \frac{n}{l_n}\rfloor\frac{1}{a_n} \var^\star \left[\sum_{i=1}^{l_n}\left(F_n(t_p+\frac{t}{\sqrt{a_n}})-\mathds{1}_{\{X_i^\star\leq t_p+\frac{t}{\sqrt{a_n}}\}}-F_n(t_p)+\mathds{1}_{\{X_i^\star\leq t_p\}}\right)\right].
\end{multline*}
Recall that the bootstrap random variables $X_1^\star,\ldots,X_l^\star$ take the values $X_{j},\ldots,X_{j+l-1}$ for $j=1,\ldots,n$ with probability $\frac{1}{n}$ and that we have to set $X_j=X_{j-n}$ for $j>n$.  So we have the following upper bound for the bootstrap variance:
\begin{multline*}
\var^\star \left[\sum_{i=1}^{l_n}\left(F_n(t_p+\frac{t}{\sqrt{a_n}})-\mathds{1}_{\{X_i^\star\leq t_p+\frac{t}{\sqrt{a_n}}\}}-F_n(t_p)+\mathds{1}_{\{X_i^\star\leq t_p\}}\right)\right]\\
\leq \frac{1}{n}\sum_{j=1}^n\max_{m=1,\ldots,l}\left(\sum_{i=j}^{j+m-1}\left(F_n(t_p+\frac{t}{\sqrt{a_n}})-\mathds{1}_{\{X_i\leq t_p+\frac{t}{\sqrt{a_n}}\}}-F_n(t_p)+\mathds{1}_{\{X_i\leq t_p\}}\right)\right)^2\\
\leq 2\frac{1}{n}\sum_{j=1}^n\max_{m=1,\ldots,l}\left(\sum_{i=j}^{j+m-1}\left(F(t_p+\frac{t}{\sqrt{a_n}})-\mathds{1}_{\{X_i\leq t_p+\frac{t}{\sqrt{a_n}}\}}-F(t_p)+\mathds{1}_{\{X_i\leq t_p\}}\right)\right)^2\\
+2l_n^2\left(F_n(t_p+\frac{t}{\sqrt{a_n}})-F(t_p+\frac{t}{\sqrt{a_n}})-F_n(t_p)+F(t_p)\right)^2.
\end{multline*}
To show the convergence of the bootstrap variance, we now need moment bounds for the maximum of the partial sums. By the inequality of Davydov (1970), we have that
\begin{multline*}
\cov\left(\mathds{1}_{\{X_1\leq t_p+\frac{t}{\sqrt{a_n}}\}}-\mathds{1}_{\{X_1\leq t_p\}},\mathds{1}_{\{X_{1+k}\leq t_p+\frac{t}{\sqrt{a_n}}\}}-\mathds{1}_{\{X_{1+k}\leq t_p\}}\right)\\
\leq C\alpha^\frac{2}{2+\epsilon}(k)(\frac{t}{\sqrt{a_n}})^{\frac{\epsilon}{2+\epsilon}},
\end{multline*}
as $|F(t_p)-F(t_p+h)|\leq C|h|$. By standard calculations
\begin{multline*}
E\left(\sum_{i=1}^{m}\left(F(t_p+\frac{t}{\sqrt{a_n}})-\mathds{1}_{\{X_i\leq t_p+\frac{t}{\sqrt{a_n}}\}}-F(t_p)+\mathds{1}_{\{X_i\leq t_p\}}\right)\right)^2\\
\leq 2m\sum_{k=1}^\infty C\alpha^\frac{2}{2+\epsilon}(k)(\frac{t}{\sqrt{a_n}})^{\frac{\epsilon}{2+\epsilon}}\leq Cma_n^{-\frac{\epsilon}{4+\epsilon}}.
\end{multline*}
We obtain the following maximal inequality by Theorem 3 of M\'oricz (1976)
\begin{multline*}
E\left(\max_{m=1,\ldots,l}\left|\sum_{i=1}^{m}\left(F(t_p+\frac{t}{\sqrt{a_n}})-\mathds{1}_{\{X_i\leq t_p+\frac{t}{\sqrt{a_n}}\}}-F(t_p)+\mathds{1}_{\{X_i\leq t_p\}}\right)\right|\right)^2\\
\leq C l\log^2 la_n^{-\frac{\epsilon}{4+\epsilon}}.
\end{multline*}
To simplify the notation, we set
\begin{equation*}
Y_n(i):=F(t_p+\frac{t}{\sqrt{a_n}})-\mathds{1}_{\{X_i\leq t_p+\frac{t}{\sqrt{a_n}}\}}-F(t_p)+\mathds{1}_{\{X_i\leq t_p\}}
\end{equation*}
By the Chebyshev inequality
\begin{multline*}
\sum_{k=1}^\infty P(\max_{n=2^k,\ldots,2^{k+1}-1}\left|\tilde{Z}_{t,n}\right|\geq \delta)\\
\shoveleft\leq \frac{1}{\delta^2}\sum_{k=1}^\infty E\left[\max_{n=2^k,\ldots,2^{k+1}-1}\lfloor \frac{n}{l_n}\rfloor\frac{1}{a_n} \frac{2}{n}\sum_{j=1}^n\left(\max_{m=1,\ldots,l}\sum_{i=j}^{j+m-1}Y_n(i)\right)^2\right]\\
\shoveright{+\frac{1}{\delta^2}\sum_{k=1}^\infty E\left[\max_{n=2^k,\ldots,2^{k+1}-1}\lfloor \frac{n}{l_n}\rfloor\frac{2}{a_n}l_n^2\left(\frac{1}{n}\sum_{i=1}^n Y_n(i)\right)^2\right]}\\
\shoveleft \leq \frac{1}{\delta^2}\sum_{k=1}^\infty 8\frac{1}{l_{2^k}}E\left(\max_{m=1,\ldots,l}\sum_{i=1}^{m}Y_n(i)\right)^2\\
\shoveright{+\frac{1}{\delta^2}\sum_{k=1}^\infty4\frac{l_{2^k}}{a_{2^k}^2}E\left(\max_{m=1,\ldots,2^{k+1}-1}\sum_{i=1}^{m}Y_n(i)\right)^2}\\
\leq C\sum_{k=1}^\infty \log^2 (l_{2^k}) a_{2^k}^{-\frac{\epsilon}{4+\epsilon}}+C\sum_{k=1}^\infty \log^2 (a_{2^k}) a_{2^k}^{-\frac{\epsilon}{4+\epsilon}}<\infty.
\end{multline*}
With the Borel-Cantelli-lemma, we have that $\tilde{Z}_{t,n}$ converges to 0 almost surely for all $t\in\R$ and the proof is complete.
\end{proof}

\section*{Acknowledgement}
The research was supported by the DFG Sonderforschungsbereich 823 (Collaborative Research Center) {\em Statistik nichtlinearer dynamischer Prozesse}. We thank the anonymous referees for their useful comments.


\begin{thebibliography}{xxx}
	\bibitem{babu}{\scshape G.J. Babu}, A note on bootstrapping the variance of sample quantile, {\slshape Ann. Inst. Statist. Math.}, {\bfseries 38} (1986) 439-443.
	\bibitem{bab2}{\scshape G.J. Babu, K. Singh}, On deviations between empirical and quantile processes for mixing random variables, {\slshape J. Multivariate Anal.}, {\bfseries 8} (1978) 532-549.
	\bibitem{baha}{\scshape R.R. Bahadur}, A note on quantiles in large samples, {\slshape Ann. Math. Stat.} {\bfseries 37} (1966) 577--580.
	\bibitem{bick}{\scshape P.J. Bickel, D.A. Freedman}, Some asymptotic theory for the bootstrap, {\slshape Ann. Stat.} {\bfseries 9} (1981) 1196-1217.
	\bibitem{brad}{\scshape R.C. Bradley}, {\slshape Introduction to strong mixing conditions}, volumes 1-3, Kendrick Press, Heber City (2007).
	\bibitem{davy}{\scshape Yu.A. Davydov}, The invariance principle for stationary processes, {\slshape Theory of Probab. Appl.} {\bfseries 15} (1970) 487-498.
	\bibitem{ghos}{\scshape J.K. Ghosh}, A new proof of the Bahadur representation of quantiles and an application, {\slshape Ann. Math. Statist.} {\bfseries 42} (1971) 1957-1961.
	\bibitem{gho2}{\scshape J.K. Ghosh, W.C. Parr, K. Sing, G.J. Babu}, A note on bootstrapping the sample median, {\slshape Ann. Stat.} {\bfseries 12} (1984) 1130-1135.
	\bibitem{gho3}{\scshape M. Ghosh, S. Sukhatme}, On Bahadur's representation of quantiles in nonregular cases, {\slshape Comm. Statist. A} {\bfseries 10} (1981) 269-282.
	\bibitem{deha}{\scshape L. de Haan, E. Taconis-Haantjes}, On Bahadur's representation of sample quantiles, {\slshape Ann. Inst. Statist. Math.} {\bfseries 31} (1979) 299-308.
	\bibitem{ibra}{\scshape I.A. Ibragimov}, Some limit theorems for stationary processes, {\slshape Theor. Probab. Appl.} {\bfseries 7} (1962) 349-382.
	\bibitem{lah2}{\scshape S.N. Lahiri}, On the Bahadur-Ghosh-Kiefer representation of sample quantiles, {\slshape Statist. \& Prob. letters} {\bfseries 15} (1992) 63-168.
	\bibitem{lahi}{\scshape S.N. Lahiri}, {\slshape Resampling methods for depenent data}, Springer, New York (2003).
	\bibitem{mori}{\scshape F. M\'oricz}, Moment inequalities and the strong laws of large numbers, {\slshape Z. Wahrsch. verw. Gebiete} {\bfseries 35} (1976) 299-314.
	\bibitem{peli}{\scshape M. Peligrad}, On the blockwise bootstrap for empirical processes for stationary sequences, {\slshape Ann. Probab.} {\bfseries 2} (1998) 877-901.
	\bibitem{poli}{\scshape D.N. Politis, J.P. Romano}, A circular block resampling procedure for stationary data, in:  R. Lepage. L. Billard, (Eds.) {\slshape Exploring the Limits of Bootstrap}, Wiley, New York, 1992, pp. 263-270.
	\bibitem{radu}{\scshape R. Radulovic}, The bootstrap of the mean for strong mixing sequences under minimal conditions, {\slshape Statist. \& Prob. letters} {\bfseries 28} (1996) 65-72.
	\bibitem{shao}{\scshape Q.M. Shao,  H. Yu}, Bootstrapping the sample means for stationary mixing sequences, {\slshape Stochastic Process. Appl.} {\bfseries 48} (1993) 175-190.
	\bibitem{sun}{\scshape S. Sun}, The Bahadur representation for sample quantiles under weak dependence, {\slshape Statist. Probab. Letters} {\bfseries 76} (2006) 1238-1244.
	\bibitem{sun2}{\scshape S. Sun, S.N. Lahiri}, Bootstrapping the sample quantile of weakly dependent sequences, {\slshape Sankhya} {\bfseries 68} (2006) 130-166.
	\bibitem{weiss}{\scshape L. Weiss}, Asymptotic distributions of quantiles in some non-standard cases, in {\slshape Nonparametric Techniques in Statistical Inference}, Cambridge Univ. Press, London (1970) 343-348.
	\bibitem{wend}{\scshape M. Wendler}, Bahadur representation for $U$-quantiles of dependent data, {\slshape J. Multivariate Anal.}, {\bfseries 102} (2011) 1064-1079.
	\bibitem{yos2}{\scshape K. Yoshihara}, The Bahadur representation of sample quantiles for sequences of strongly mixing random variables, {\slshape Statist. Probab. Letters} {\bfseries 24} (1995) 299-304.
\end{thebibliography}
\end{document}